\newtheorem*{thm*}{Theorem}
\newtheorem*{cor*}{Corollary}
\theoremstyle{definition}
\theoremstyle{remark}
\newcommand{\norm}[1]{\left\Vert#1\right\Vert}
\newcommand{\abs}[1]{\left\vert#1\right\vert}
\newcommand{\cx}{{\mathbb{C}}}
\newcommand{\rl}{{\mathbb{R}}}
\newcommand{\D}{\mathbb{D}}
\newcommand{\Z}{\mathbb{Z}}
\newcommand{\N}{\mathbb{N}}
\newcommand{\ol}{\overline}
\newcommand{\wt}{\widetilde}
\title{On an observation of Sibony}
\author{Debraj Chakrabarti}
\address{Department of Mathematics, Central Michigan University, Mount Pleasant, MI 48859, U.S.A.}
\email{chakr2d@cmich.edu}
\thanks{The author was  partially supported by a National Science Foundation grant (DMS-1600371), and  by a collaboration grant from the Simons Foundation (\# 316632).}
\begin{document}
\maketitle
\begin{abstract}
It is shown that if the boundary of a Reinhardt domain  in $\mathbb{C}^n$ contains the origin,
then the origin has a neighborhood to which each holomorphic function on the domain which is infinitely many times differentiable up to the boundary extends 
holomorphically. 
\end{abstract}
For a domain $\Omega$ in $\cx^n$, let  $\mathcal{C}^\infty(\ol{\Omega})$ denote the space of those smooth  functions on $\Omega$  whose partial derivatives of all orders extend continuously to  the closure $\ol{\Omega}$, and let $\mathcal{O}(\Omega)$ be the space of holomorphic functions on $\Omega$.
In this note we prove the following result:

\begin{thm*}\label{thm-sibony} Let $\Omega\subset\cx^n$ be a Reinhardt domain such that the origin is a boundary
point of $\Omega$. Then there is a neighborhood of the origin such that  each  function in  $\mathcal{C}^\infty(\ol{\Omega})\cap\mathcal{O}(\Omega)$ extends holomorphically to this neighborhood.
\end{thm*}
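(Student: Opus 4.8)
The plan is to expand such an $f$ in its Laurent series and to argue that the presence of the origin on $\pa\Omega$, combined with smoothness up to $\ol\Omega$, kills every monomial carrying a negative exponent. Because $\Omega$ is Reinhardt, every $f\in\mathcal O(\Omega)$ has a unique Laurent expansion $f(z)=\sum_{\alpha\in\Z^n}c_\alpha z^\alpha$ converging normally on compact subsets of $\Omega$, whose coefficients are recovered by
\[ c_\alpha\, r^\alpha=\frac{1}{(2\pi)^n}\int_{[0,2\pi]^n} f(r_1 e^{i\theta_1}\ld r_n e^{i\theta_n})\, e^{-i\alpha\cdot\theta}\, d\theta,\qquad r^\alpha:=\prod_{j} r_j^{\alpha_j}, \]
for every polyradius $r$ with $\{|z_j|=r_j\}\subset\Omega$. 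Thus the theorem reduces to two claims: that $c_\alpha=0$ whenever $\alpha\notin\N^n$, and that the resulting genuine power series converges on a full neighborhood of $0$. The second will be routine; all the work is in the first.

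For the vanishing, I first use $0\in\pa\Omega$ together with openness of $\Omega$ to choose points $p_k\in\Omega\cap(\cx^\ast)^n$ with $p_k\to0$, so that $x^{(k)}_j:=\log|p_{k,j}|\to-\infty$ for each $j$. Fix $\alpha$ with some $\alpha_j<0$ and suppose, for contradiction, that $c_\alpha\neq0$. The idea is to differentiate away the nonnegative exponents: setting $\alpha^+=(\max(\alpha_j,0))_j$ and $g=\pa^{\alpha^+}f\in\mathcal C^\infty(\ol\Omega)\cap\mathcal O(\Omega)$, a direct check shows that the Laurent coefficient of $g$ at the exponent $\gamma:=\alpha-\alpha^+$, whose entries $\min(\alpha_j,0)$ are nonpositive and not all zero, equals $c_\alpha\prod_{\alpha_j>0}\alpha_j!\neq0$. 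Since $g$ extends continuously to the origin, the coefficient formula applied on the torus through $p_k$ gives, as $k\to\infty$,
\[ \Bigl|\,c_\alpha{\textstyle\prod_{\alpha_j>0}}\alpha_j!\,\Bigr|\; e^{\gamma\cdot x^{(k)}}\longrightarrow 0, \]
because the torus averages of $g$ tend to $g(0)$ while $\int e^{-i\gamma\cdot\theta}\,d\theta=0$ for $\gamma\neq0$. On the other hand $\gamma\cdot x^{(k)}=\sum_{\alpha_j<0}\gamma_j x^{(k)}_j\to+\infty$, each surviving summand being a negative number times $x^{(k)}_j\to-\infty$; hence $e^{\gamma\cdot x^{(k)}}\to+\infty$, and the display forces $c_\alpha=0$. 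I expect this differentiation step to be the main obstacle, and it is where the full strength of the hypothesis enters: a monomial like $z_1/z_2$ can remain bounded, even continuous, up to a boundary point at the origin without extending, so mere continuity cannot suffice; the pole is detected only after enough differentiations, which is precisely why one needs $\mathcal C^\infty(\ol\Omega)$ rather than $\mathcal C^k$.

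It then remains to extend. Having shown $c_\alpha=0$ for all $\alpha\notin\N^n$, I fix any single $q\in\Omega\cap(\cx^\ast)^n$; normal convergence of the Laurent series on the compact torus $\{|z_j|=|q_j|\}\subset\Omega$ yields $\sum_{\alpha\ge0}|c_\alpha|\,|q|^\alpha<\infty$, so Abel's lemma makes $\sum_{\alpha\ge0}c_\alpha z^\alpha$ converge uniformly on the closed polydisc $\{|z_j|\le|q_j|\}$ and holomorphic on its interior, where it agrees with $f$. As all coordinates of $q$ are nonzero, this open polydisc is a neighborhood of the origin; and since the estimate used only convergence of $f$ on $\Omega$, the very same neighborhood serves every $f\in\mathcal C^\infty(\ol\Omega)\cap\mathcal O(\Omega)$ at once, which is the uniform statement required.
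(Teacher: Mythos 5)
Your proposal is correct and takes essentially the same route as the paper: Laurent expansion, applying $\pa^{\alpha^+}$ to reduce to a purely nonpositive exponent $\gamma\neq 0$, using the torus coefficient formula and the $\mathcal{C}^\infty(\ol{\Omega})$ hypothesis at the origin to force $c_\alpha=0$, and then Abel's lemma on a polydisc through a point $q\in\Omega\cap(\cx^*)^n$, which is independent of $f$. The differences are only in implementation --- you differentiate the series termwise and take a limit along $p_k\to 0$ using continuity of $g$ at the origin, while the paper differentiates the Cauchy integral under the integral sign and uses a global sup bound (after reducing to bounded $\Omega$, plus a removable-singularity remark across the coordinate hyperplanes $Z$); your variant tidily avoids both of those reductions but embodies the same idea.
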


The special case of this result when $\Omega$ is the ``Hartogs Triangle''  $H=\{\abs{z_1}< \abs{z_2}<1\}\subset \cx^2$ was noted by Sibony in \cite[p. 220]{sibony1975}, where the neighborhood of the origin to which functions extend is the bidisc.  Sibony's result constitutes a  refinement of the classical fact that 
the {\em Nebenhülle} of the Hartogs triangle is the bidisc (see \cite{behnke1933}), i.e. the bidisc is the largest open set contained in each Stein neighborhood of the closure  $\ol{H}$. Consequently, each function holomorphic in a neighborhood of $\ol{H}$ extends to the bidisc.  As is well-known, it is possible for a smoothly bounded pseudoconvex  domain  to have nontrivial Nebenhülle  (see \cite{worm}). On the other hand, each smoothly bounded pseudoconvex domain $\Omega$
is a {\em $\mathcal{C}^\infty$-domain of holomorphy}, i.e. $\Omega$
admits a function in $\mathcal{C}^\infty(\ol{\Omega})\cap \mathcal{O}(\Omega)$ which does not extend holomorphically past any boundary point (see \cite{catlin, hakimsibony}).  Therefore, the Hartogs triangle, which is not a $\mathcal{C}^\infty$-domain of holomorphy, displays a behavior which is specific to  non-smooth domains. The problem of geometrically characterizing the Reinhardt $\mathcal{C}^\infty$-domains of holomorphy among the Reinhardt domains of holomorphy was solved in \cite[Proposition~6]{jarnickipflug1997} (see also \cite[Section~3.5]{jarpflubook}).  The result of \cite{jarnickipflug1997} implies that a Reinhardt domain of holomorphy with the origin on the boundary is not a $\mathcal{C}^\infty$-domain of holomorphy, a fact which also follows from our theorem. However, our result  not only identifies certain Reinhardt domains $\Omega$ (whether or not domains of holomorphy) as not being $\mathcal{C}^\infty$-domains of holomorphy, but also gives a simple 
and concrete holomorphic extension of each function in $\mathcal{C}^\infty(\ol{\Omega})\cap \mathcal{O}(\Omega)$ to an explicit fixed larger domain containing the origin (see corollary below after the proof of the theorem).

Of course,  for a domain $\Omega$,  the existence of a Nebenhülle or a $\mathcal{C}^\infty$-envelope of holomorphy  (smallest open set to which all functions in $\mathcal{C}^\infty(\overline{\Omega})\cap \mathcal{O}(\Omega)$ extend holomorphically)
  is  most interesting when $\Omega$ is pseudoconvex, i.e., when there is no point outside $\Omega$ to which each function holomorphic on $\Omega$ extends. Using our theorem, it is easy to give  examples of (nonsmooth)
pseudoconvex domains 
with nontrivial $\mathcal{C}^\infty$-envelopes analogous to the Hartogs triangle: if $\alpha=(\alpha_1,\dots, \alpha_n)\in \Z^n$ is a multi-index such that   $\alpha\not \in \N^n$ and $-\alpha\not \in \N^n$, then let
\[ H(\alpha)= \{ z\in \D^n| \abs{z^\alpha} <1\},\]
where $\D^n$ is the unit polydisc.   From the theorem,  $H(\alpha)$ has a nontrivial $\mathcal{C}^\infty$-envelope of holomorphy, and noting the convexity of the image of the map $z\mapsto (\log\abs{z_1},\dots, \log\abs{z_n})$ we see that $H(\alpha)$ is pseudoconvex (see 
 \cite[Section 3.8]{range}).   For $n=2$, these domains are precisely the ``fat'' and ``thin'' generalized Hartogs triangles of rational exponent, which have been studied extensively recently (see \cite{ChaZey16, EdhMcN16,Edh16})  and shown to have unexpected properties as far as the $L^p$ regularity of the 
Bergman projection is concerned.  
\begin{proof} We will use the usual multi-index notation in dealing with functions of several variables. Also, we will assume that $\Omega$ is bounded. This is no loss of generality, since we can always replace $\Omega$ by its intersection with a polydisc and prove the result for this bounded domain.

Let $f\in \mathcal{C}^\infty(\ol{\Omega})\cap \mathcal{O}(\Omega).$
Since $f$ is holomorphic on the Reinhardt 
domain $\Omega$, there is a Laurent expansion
\begin{equation}\label{eq-laurent}
f(z)= \sum_{\alpha\in \Z^n} c_\alpha z^\alpha
\end{equation}
which is uniformly and absolutely convergent on compact subsets of $\Omega$ (see, e.g., \cite{range}). Let $Z=\{w\in \cx^n | w_j=0 \text{ for some } \, j\}$. The coefficients $c_\alpha\in \cx$ are represented by the well-known Cauchy formula: if $w\in \Omega\setminus Z$,  we have the $n$-fold repeated line-integral representation
\[ c_\alpha = \frac{1}{(2\pi i)^n }\int_{\abs{z_1}=\abs{w_1}} \cdots \int_{\abs{z_n}=\abs{w_n}} \frac{f(z)}{z^\alpha}\cdot \frac{dz_n}{z_n}\cdots \frac{dz_1}{z_1}.\]
Parametrize the contours by $z_j = w_j e^{i\theta_j}$, where $0\leq \theta_j \leq 2\pi$.   Notice that then we have
\[ dz_j = w_j\cdot i e^{i\theta_j} d\theta_j=i z_j d\theta_j,\]
so that
\[ c_\alpha =  \frac{1}{(2\pi i)^n }\int_{\theta_1=0}^{2\pi} \cdots \int_{\theta_n=0}^{2\pi} \frac{f(w_1e^{i\theta_1}, \dots, w_n e^{i\theta_n})}{w^\alpha\exp\left(i(\alpha_1\theta_1+\dots+\alpha_n\theta_n)\right)} (i d\theta_n) \dots (id\theta_1).\] 
Therefore, using an obvious ``vector-like'' notation, we have for each $w\in \Omega\setminus Z$ that
\[ c_\alpha w^\alpha =  \frac{1}{(2\pi)^n }\int_{\theta_1=0}^{2\pi} \cdots \int_{\theta_n=0}^{2\pi} \frac{f(w\cdot e^{i\theta})}{\exp\left(i\langle \alpha, \theta \rangle \right)} d\theta_n \dots d\theta_1.\] 
Write the multi-index $\alpha= \beta-\gamma$, where $\beta_j=\max(\alpha_j,0)$ and  $\gamma_j=\max(-\alpha_j,0)$. Then $\beta, \gamma\in \N^n$ and we can rewrite
\[ c_{\beta-\gamma}\cdot \frac{w^\beta}{w^\gamma} =  \frac{1}{(2\pi)^n }\int_{\theta_1=0}^{2\pi} \cdots \int_{\theta_n=0}^{2\pi} \frac{f(w\cdot e^{i\theta})}{\exp\left(i\langle \beta-\gamma, \theta \rangle \right)} d\theta_n \dots d\theta_1.\] 
Apply the differential operator $\left(\frac{\partial}{\partial w}\right)^\beta$ to both sides, which gives us
\begin{equation} \label{eq-afterdiff}  \frac{c_{\beta-\gamma}\cdot\beta!}{w^\gamma} = 
 \frac{1}{(2\pi)^n }\int_{\theta_1=0}^{2\pi} \cdots \int_{\theta_n=0}^{2\pi} {\exp\left(i\langle \gamma, \theta \rangle \right)}\,{\frac{\partial^\beta f}{\partial w^\beta}(w\cdot e^{i\theta})} \,d\theta_n \dots d\theta_1. \nonumber \end{equation}
 Taking absolute values, and doing a simple sup norm estimate, we see that
 \begin{align*} \abs{  \frac{c_{\beta-\gamma}\cdot\beta!}{w^\gamma} }&= \abs{\frac{1}{(2\pi)^n }\int_{\theta_1=0}^{2\pi} \cdots \int_{\theta_n=0}^{2\pi} {\exp\left(i\langle \gamma, \theta \rangle \right)}\,{\frac{\partial^\beta f}{\partial w^\beta}(w\cdot e^{i\theta})} \,d\theta_n \dots d\theta_1 }\\& \leq \norm{\frac{\partial^\beta f}{\partial w^\beta}}_\infty \\&<\infty ,\end{align*}
 where the finiteness of the sup norm of the derivative follows since $f\in \mathcal{C}^\infty(\ol{\Omega})$ and $\Omega$ is assumed to be bounded. Therefore the function on $\Omega\setminus Z$ given by  $w\mapsto {c_{\alpha}}{w^{-\gamma}} $ is bounded and therefore extends holomorphically across the analytic set $Z$ to the function given by the same formula on $\Omega$, and the extended function admits the same bound. Since the origin is a boundary point of $\Omega$,  this means that if $\gamma\not=0$, then $c_\alpha=0$, so that the Laurent series in \eqref{eq-laurent} reduces to a {\em Taylor} series
 \begin{equation} \label{eq-taylor} \sum_{\alpha \in \N^n} c_\alpha z^\alpha\end{equation}
i.e. there are no terms with negative powers of the coordinates, and represents the function $f$ on $\Omega$.
 Let $w\in\Omega$ so that the series \eqref{eq-taylor} converges when $z=w$. It follows from Abel's lemma (\cite[Lemma 1.15]{range}) that the series  
\eqref{eq-taylor} actually  converges in the polydisc $P_w=\{z\in \cx^n|  \abs{z_1}<\abs{w_1},\dots, \abs{z_n}<\abs{w_n}\}$.  Therefore, \eqref{eq-taylor} converges 
on the open set $\Omega\cup P_w$ to a holomorphic function $\wt{f}$, and $\wt{f}|_\Omega=f$. The proof is complete, since $P_w$ does not depend in any way on the choice of the function $f$.  \end{proof}
Recall that Reinhardt domain in $\cx^n$ is {\em log-convex} if its image in $\rl^n$ under the map $z\mapsto (\log\abs{z_1},\dots, \log\abs{z_n})$ is convex. An examination  of the proof above shows the following more precise version of the theorem actually holds.
\begin{cor*}Let $\Omega$ be as in the theorem above. Then each function in $\mathcal{C}^\infty(\ol{\Omega})\cap\mathcal{O}(\Omega)$ extends holomorphically to the the 
smallest complete log-convex Reinhardt domain containing the domain $\Omega$. 
\end{cor*}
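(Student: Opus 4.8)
The plan is to read off the extension directly from the Taylor series already produced in the proof of the theorem. Recall that that proof shows the Laurent expansion \eqref{eq-laurent} of $f$ reduces to the Taylor series \eqref{eq-taylor}, and that this series represents $f$ on all of $\Omega$. Let $D\subseteq\cx^n$ denote the domain of convergence of \eqref{eq-taylor}, i.e. the interior of the set of points at which the series converges; its sum $\wt f$ is holomorphic on $D$, and since the series represents $f$ on $\Omega$ we have $\Omega\subseteq D$ and $\wt f|_\Omega=f$. It therefore suffices to show that $D$ is a \emph{complete log-convex Reinhardt domain}, for then $D$ contains the smallest such domain $\widehat\Omega$ containing $\Omega$, and $\wt f$ extends $f$ holomorphically to $\widehat\Omega$.

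That $D$ is a complete Reinhardt domain is exactly the content already used in the theorem: $D$ is visibly invariant under the rotations $z\mapsto z\cdot e^{i\theta}$, and Abel's lemma (\cite[Lemma 1.15]{range}) shows that whenever the series converges at $w$ it converges on the full polydisc $P_w$, so that $w\in D$ implies $P_w\subseteq D$. The one new ingredient is log-convexity. Here I would argue as follows: convergence of the series at a point forces its terms to be bounded there. Given $w^{(0)},w^{(1)}\in D\setminus Z$, with the terms bounded by $M_0$ and $M_1$ respectively, and $t\in[0,1]$, let $w^{(t)}$ be the point with $\abs{w^{(t)}_j}=\abs{w^{(0)}_j}^{1-t}\abs{w^{(1)}_j}^{t}$. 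Since $\alpha\in\N^n$ and the coordinates are nonzero, the terms at $w^{(t)}$ factor as
\[ \abs{c_\alpha (w^{(t)})^\alpha} = \abs{c_\alpha (w^{(0)})^\alpha}^{1-t}\,\abs{c_\alpha (w^{(1)})^\alpha}^{t}\le M_0^{1-t}M_1^{t}, \]
so the terms remain bounded at $w^{(t)}$, and Abel's lemma again gives convergence on $P_{w^{(t)}}$, whence $w^{(t)}\in D$. Thus the image of $D$ under $z\mapsto(\log\abs{z_1},\dots,\log\abs{z_n})$ is convex, i.e. $D$ is log-convex.

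It remains only to note that the smallest complete log-convex Reinhardt domain containing $\Omega$ is well defined: the defining features—being Reinhardt, being complete, and having convex, downward-closed logarithmic image—are each preserved under arbitrary intersection, and every complete Reinhardt domain contains the origin, so the intersection of all complete log-convex Reinhardt domains containing $\Omega$ is again a nonempty connected domain of the same type. Since $D$ is one such domain, it contains this minimal domain $\widehat\Omega$, and $\wt f|_{\widehat\Omega}$ is the desired extension.

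I expect the log-convexity estimate to be the only genuine step; everything else is bookkeeping layered on top of the theorem. It is worth stressing that log-convexity of the domain of convergence of a power series is entirely classical—the domain of convergence of a multivariable power series is always a complete logarithmically convex Reinhardt domain—so one could simply cite this and omit the displayed computation; I have kept the estimate because it is short and renders the corollary self-contained.
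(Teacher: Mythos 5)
Your proposal is correct and takes essentially the same route as the paper: extract the Taylor series from the theorem's proof and use the fact that the domain of convergence of a power series is a complete log-convex Reinhardt domain—the paper simply cites this classical fact (Range, Chapter 2, Theorem 3.28, applied to the smallest complete Reinhardt domain $\bigcup_{w\in\Omega}P_w$), whereas you reprove it via the interpolation estimate. One small polish: bounded terms at $w^{(t)}$ plus Abel's lemma yield convergence only on $P_{w^{(t)}}$, which does not contain $w^{(t)}$ itself, so you should interpolate between slightly inflated points $\lambda w^{(0)},\lambda w^{(1)}\in D$ with $\lambda>1$ (possible since $D$ is open) to conclude $w^{(t)}\in D$.
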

\begin{proof} The last part of the proof of the theorem shows that each function
in   $\mathcal{C}^\infty(\ol{\Omega})\cap\mathcal{O}(\Omega)$  extends holomorphically to the open set
\[ \wt{\Omega}=\bigcup_{w\in \Omega} P_w= \bigcup_{w\in \Omega} \left\{z\in \cx^n|  \abs{z_1}<\abs{w_1},\dots, \abs{z_n}<\abs{w_n}\right\},\]
which  is  the smallest {\em complete} Reinhardt domain containing the domain $\Omega$, and on $\wt{\Omega}$ the Taylor series representation \eqref{eq-taylor} holds.  By a well-known classical result
 (see \cite[Chapter 2, Theorem 3.28]{range}), the series \eqref{eq-taylor} in fact converges in the smallest log-convex complete Reinhardt domain containing $\wt{\Omega}$, thus defining a holomorphic extension.  The corollary follows.
\end{proof}

{\bf Acknowledgments:} The author would like to thank Peter Pflug for bringing to 
his notice the results of \cite{jarnickipflug1997}. He would also like to thank the editor, Harold P. Boas,  for his comments leading to significant improvements in the paper. 

\bibliographystyle{plain}
\bibliography{power}{}

\begin{thebibliography}{10}

\bibitem{behnke1933}
Heinrich Behnke.
\newblock {Z}ur {T}heorie der {S}ingularit{\"a}ten der {F}unktionen mehrerer
  komplexen {V}er{\"a}nderlichen.
\newblock {\em Math. Ann.}, 108(1):91--104, 1933.

\bibitem{catlin}
David Catlin.
\newblock Boundary behavior of holomorphic functions on pseudoconvex domains.
\newblock {\em J. Differential Geom.}, 15(4):605--625 (1981), 1980.

\bibitem{ChaZey16}
Debraj Chakrabarti and Y.~Zeytuncu.
\newblock ${L}^p$ mapping properties of the {B}ergman projection on the
  {H}artogs triangle.
\newblock {\em Proc. Amer. Math. Soc.}, 144(4):1643--1653, 2016.

\bibitem{worm}
Klas Diederich and John~Erik Fornaess.
\newblock Pseudoconvex domains: an example with nontrivial {N}ebenh{\"u}lle.
\newblock {\em Math. Ann.}, 225(3):275--292, 1977.

\bibitem{EdhMcN16}
L.~D. Edholm and J.~D. McNeal.
\newblock The {B}ergman projection on fat {H}artogs triangles: ${L}^p$
  boundedness.
\newblock {\em Proc. Amer. Math. Soc.}, 144(5):2185--2196, 2016.

\bibitem{Edh16}
Luke~D. Edholm.
\newblock {B}ergman theory of certain generalized {H}artogs triangles.
\newblock {\em Pacific J. Math.}, 284(2):327--342, 2016.

\bibitem{hakimsibony}
Monique Hakim and Nessim Sibony.
\newblock Spectre de {$A(\bar \Omega )$}\ pour des domaines born{\'e}s
  faiblement pseudoconvexes r{\'e}guliers.
\newblock {\em J. Funct. Anal.}, 37(2):127--135, 1980.

\bibitem{jarnickipflug1997}
Marek Jarnicki and Peter Pflug.
\newblock On {$n$}-circled {${\mathscr H}^\infty$}-domains of holomorphy.
\newblock {\em Ann. Polon. Math.}, 65(3):253--264, 1997.

\bibitem{jarpflubook}
Marek Jarnicki and Peter Pflug.
\newblock {\em First steps in several complex variables: {R}einhardt domains}.
\newblock EMS Textbooks in Mathematics. European Mathematical Society (EMS),
  Z{\"u}rich, 2008.

\bibitem{range}
R.~Michael Range.
\newblock {\em Holomorphic functions and integral representations in several
  complex variables}, volume 108 of {\em Graduate Texts in Mathematics}.
\newblock Springer-Verlag, New York, 1986.

\bibitem{sibony1975}
Nessim Sibony.
\newblock Prolongement des fonctions holomorphes born{\'e}es et m{\'e}trique de
  {C}arath{\'e}odory.
\newblock {\em Invent. Math.}, 29(3):205--230, 1975.

\end{thebibliography}

\end{document}